%Last modified by Luis 07/30/19
\documentclass[psamsfonts]{amsart}
\markboth{left head}{right head}
\usepackage{amsmath,amstext,amssymb,amsopn,amsthm}
\usepackage{amsmath,amssymb,amsthm}
\usepackage[mathscr]{eucal}
\usepackage{bbm}
\usepackage{color}
\usepackage{tikz}
\usetikzlibrary{arrows}
\usepackage[unicode,bookmarks,colorlinks]{hyperref}
\hypersetup{
    linkcolor=brickred,
}
\usepackage{multirow}
\usepackage{bigstrut}

\definecolor{mahogany}{cmyk}{0, 0.77, 0.87, 0}
\definecolor{salmon}{cmyk}{0, 0.53, 0.38, 0}
\definecolor{melon}{cmyk}{0, 0.46, 0.50, 0}
\definecolor{yellowgreen}{cmyk}{0.44, 0, 0.74, 0}
\definecolor{brickred}{cmyk}{0, 0.89, 0.94, 0.28}
\definecolor{OliveGreen}{cmyk}{0.64, 0, 0.95, 0.40}
\definecolor{RawSienna}{cmyk}{0, 0.72, 1.0, 0.45}
\definecolor{ZurichRed}{rgb}{1, 0, 0} 

\bibliographystyle{plain}
\numberwithin{equation}{section}

\newtheorem{thm}{Theorem}[section]        %newenvironments
\newtheorem{cor}{Corollary}[section]
\newtheorem{lem}{Lemma}[section]
\newtheorem{prop}{Proposition}[section]

\newtheorem{definition}{Definition}[section]

\newenvironment{•}{•}{•}
\newcommand{\R}{\mathbb{R}}                  %newcommands
\newcommand{\Rd}{\R^d}               
\newcommand{\iord}{\int_{\Rd}}              %int over rd
\newcommand{\intd}[1]{\int_{#1} }    %int over general 
\newcommand{\set}[1]{ \left\{#1\right\} }
\newcommand{\mysum}[3]{\sum\limits_{#1=#2}^{#3}}          %#1,#3 are a {i,j,k}

              %wh=widehat
   %eid=equal in distribution
\newcommand{\abs}[1]{\left|#1\right|}
                              % braquets in set notation

\newcommand{\tgo}{t \rightarrow 0+}

\newcommand{\Prob} {\mathbb{P} }
\newcommand{\palp}{p_{t}}
\newcommand{\Halp}{\mathbb{H}_{B}(t)}

\newcommand{\pthesis}[1]{\left(#1\right)}

\newcommand{\dom}{B}
\newcommand{\cvf}{g_{\dom}}
\newcommand{\indf}[1]{\mathbbm{1}_{#1}}

\newcommand{\lo}{2}

\newcommand{\bb}{\mathbb{S}^{d-1}}
\newcommand{\ra}{\rightarrow}
\usepackage[margin=1.25in]{geometry}
\setlength{\textheight}{55pc} 
\setlength{\textwidth}{35pc}
\DeclareMathOperator{\arcsinh}{arcsinh}
\begin{document}

\title[ Heat content for Poisson kernel]{On the heat content for the Poisson kernel over the unit ball in the euclidean space.}
\author{Luis Acu\~na Valverde}
\address{Department of Mathematics, Universidad de Costa Rica, San Jos\'e, Costa Rica.}
%\thanks{supported by Universidad de Costa Rica-2016. }
\email{luis.acunavalverde@ucr.ac.cr/ guillemp22@yahoo.com}
\maketitle

\begin{abstract}
This paper studies, by employing analytical tools, the small time behavior of the heat content for the Poisson kernel over  the unit ball in $\Rd$, $d\geq 2$ by working with its  related set covariance function. As a result, we obtain a representation for the third term in the expansion of the heat content over the unit ball and provide the explicit form of the this term in the particular cases $d=2$ and  $d=3$.
\end{abstract}
{\footnotesize {\bf Keywords}: covariance function, heat content, Poisson kernel, Cauchy stable processes.}

\section{introduction}
Let $d\geq 2$ be an integer. We consider the Poisson heat kernel defined by
\begin{align}\label{cd}
p_t(x)=\frac{k_d\,t}{(t^2+|x|^2)^{\frac{d+1}{2}}},\,\,x\in \R^d, \,\, t>0,\,
\end{align}
where  $|x|$  shall stand for the usual norm of $x\in \R^d$  and
\begin{align}\label{kappadef}
k_d=\frac{\Gamma\pthesis{\frac{d+1}{2}}}{\pi^{\frac{d+1}{2}}}.
\end{align}

We observe that the Poisson heat kernel has the following properties.
\begin{enumerate}
\item[$i)$] For all $t>0$, $p_t(x)$ is a radial function. That is,   $p_t(x)=p_t(|x|\,e_d)$ for all $t>0$ and $x\in \Rd$ where  $e_d=(0,...,0,1)\in \R^d$ . Moreover, for all $t>0$, we have
\begin{align}\label{intvalue1}
\int_{\R^d}dx\,p_t(x)=1.
\end{align}
$ii)$ The heat kernel $p_t(x)$ satisfies the scaling property:
\begin{align*}
p_t(x)=t^{-d}p_1(t^{-1}x), t>0, x\in \Rd.
\end{align*}
\end{enumerate}
%--------------------------------------------------------------------------------------------

Before continuing, we provide some useful notations. Throughout the paper,   for $r>0$ and $x\in\Rd$, $B_r(x)$ will denote the ball in $\Rd$ centered at $x$ with radius $r$, where for simplicity the unit ball $B_1(0)$ will be denoted by $B$. Moreover, $\bb$ will denote the boundary of the unit ball $B$. Henceforth, the volume  and   surface area of the unit ball $B$ in $\Rd$ will be denoted by
$|B|$ and $|\bb|$, respectively. It is also convenient to denote the volume and surface area of the unit ball in $\R^k$ for $k\leq d$ by $w_k$ and $A_k$, respectively. That is, 
\begin{align}\label{vaunitball}
w_k=\frac{\,\pi^{\frac{k}{2}}}{\Gamma\pthesis{1+\frac{k}{2}}} \,\,\mbox{and}\,\,A_k=kw_k=\frac{2\pi^{\frac{k}{2}}}{\Gamma\pthesis{\frac{k}{2}}}.
\end{align}

With the appropriate geometric objects already introduced, we proceed to present the function to be investigated. 

\begin{definition} We  define
\begin{align}\label{defhc}
\Halp=\intd{\dom}dx\intd{\dom}dy\,\palp(x-y),
\end{align} 
which will be called {\it the heat content}  over the unit ball  in $\Rd$ with respect to the Poisson heat kernel. 
\end{definition}
%-----------------------------------------------------------------------------------------

In order to motivate our work and show the connections of our results to other areas as probability and spectral theory, we point out that there exists  a L\'evy stochastic process called the Cauchy stable process, denoted by $X=\set{X_t}_{t\geq0}$ having transition densities  given by
identity \eqref{cd}. That is, for any $\Omega\subset\Rd$ Lebesgue measurable set, we have
\begin{align}\label{probc}
\Prob\pthesis{X_t\in \Omega|X_0=x}=\int_{\Omega}p_t(x-y)dy,
\end{align}
which implies that  $ \Halp$ defined in \eqref{defhc} can be rewritten in probabilistic terms by 
$$ \Halp=\int_{B}dx\,\Prob\pthesis{X_t\in B|X_0=x}.$$ This last identity tells us that  $\Halp$ is bounded over $ [0,\infty)$, since 
$$0\leq \Halp\leq \intd{\dom}dx = |B|<\infty,$$
where we have used that any probability measure is  non-negative  and upper bounded by one.

The interest in studying the heat content over the unit ball arises from the paper \cite{ Acu1, Acu},  where estimates concerning   the   behavior of $\Halp$ as $\tgo$ provide  an insight of the possible small time behavior of {\it the spectral heat content for the unit ball} which is defined by 
\begin{align}\label{shc}
Q_{B}(t)=\int_{B}dx\,\int_{B}dy\,\,p^{B}_t(x,y),
\end{align}
where $p^{B}_t(x,y)$ is the transition density for the  Cauchy stable process killed upon exiting $B$. Namely,
\begin{equation}\label{tran.den.dom}
p^{B}_t(x,y)=\palp(x,y)\,\,\mathbb{P}\left(X_s\in B ,\,\,\forall \, 0\leq s\leq t \,\,|\,\, X_0=x,\,\,X_t=y\right).
\end{equation}
%==================================================

The name {\it spectral heat content}  given to $Q_{B}(t)$ comes  from the fact that $p^{B}_t(x,y)$ can be written in terms of the eigenvalues and the eigenfunctions associated to the unit ball. That is,  it is known (see \cite{Davies} for details) that there exists an orthonormal basis of eigenfunctions $\{\phi_n \}_{n\in \mathbb{N}}$ for $L^2(B)$  with  corresponding eigenvalues $\{\lambda_n \}_{n \in \mathbb{N}}$
satisfying $0 < \lambda_1 < \lambda_2 \leq \lambda_3 \leq . . .$
and  $\lambda_n \rightarrow \infty$ as $n \rightarrow \infty$ such that
\begin{equation}\label{spect.dec.palp}
p^{B}_t(x,y)=\mysum{n}{1}{\infty}e^{-t\lambda_n}\,\phi_{n}(x)\,\phi_n(y).
\end{equation}

Notice that due to \eqref{shc} and the last equality, we obtain an expression for 
$Q_{B}(t)$ involving both the spectrum $\set{ \lambda_n}_{ n\in \mathbb{N}}$ and  the eigenfunctions $\set{\phi_{n}}_{n\in \mathbb{N}}$. Namely,
$$Q_{B}(t)=\mysum{n}{1}{\infty}e^{-t\lambda_{n}}\pthesis{\int_{B}dx\,\phi_n(x)}^2.$$

%==================================================

We observe, by appealing to $\eqref{tran.den.dom}$, that $0\leq p_t^{B}(x,y)\leq p_t(x,y)$  which in turn implies that $0\leq Q_{B}(t)\leq \Halp$ for all $t>0$. The last inequality combined  with some other facts exposed in \cite{ Acu1, Acu, Acu4} point out   that the small time behaviors of $Q_{B}(t)$ and $\Halp$ might be similar. It is worth-mentioning that the study of the spectral heat content and heat content started by considering first the Gaussian kernel whose expansions contain geometric information  over the underlying sets where these functions are defined. We refer the interested to  \cite{vanden3, van8, van6, Mir} for further details in these topics and \cite{Cyg, TPS} for results concerning the heat content and spectral heat content associated to other L\'evy processes besides of the Brownian motion.

In  \cite{Acu}, the following conjecture is given.\\
{\bf Conjecture:} For $\Omega\subset \R^d$ an open bounded set with smooth boundary,  the following limit 
\begin{align}\label{lim}
\lim\limits_{\tgo}\frac{1}{t}\pthesis{|\Omega|-\mathbb{H}_{\Omega}(t)-\frac{Per(\Omega)}{\pi}t\ln\pthesis{\frac{1}{t}}}
\end{align} 
exists, where $|\Omega|$ and $Per(\Omega)$ stand for the volume and surface area of the boundary of $\Omega$, respectively. {\it The purpose of this paper is to show that the conjecture holds true for the unit ball in $\Rd$.}  

The main idea in the proof  is based on the fact that the heat content $\Halp$ can be expressed in terms of the set covariance function related to $B$, which we proceed to introduce. 

%-----------------------------------------------------------------------------------------

\begin{definition}[\bf Set covariance function]
 The covariance function of $\dom$ is denoted by $\cvf(z)$ and defined for each $z\in \Rd$ by
\begin{align}\label{gdef}
\cvf(z)=\abs{\dom \cap \pthesis{\dom+z}}=\iord dx\,\indf{\dom}(x)\,\indf{\dom}(x-z).
\end{align}
\end{definition}
Thus, $\cvf(z)$ provides the volume of that domain obtained by intersecting the balls $B$ and $B+z=B_1(z)$ with radii one.

In order to state our result, we need the following function defined for $s>0$,
\begin{align}\label{impfunct}
\gamma_{\dom}(s)=w_{d-1}-\frac{\pthesis{\cvf\pthesis{0}-\cvf\pthesis{se_d}}}{s},
\end{align}
where we recall that $w_{d-1}$ denotes the volume of the unit ball in $\R^{d-1}$ and $e_d=(0,...,0,1)\in \Rd$.

With all the required details being introduced, we continue to  present our main result. 
%===================================================

\begin{thm}\label{mthm}  Consider $\dom$ the unit ball in $\Rd$, $d\geq 2$.  Let $\kappa_d$ and $\gamma_{B} $  be  defined as in \eqref{kappadef}, \eqref{impfunct}, respectively.  Then,
\begin{align}\label{implim}
\lim\limits_{\tgo}&\frac{1}{t}\pthesis{|\dom|-\Halp-\frac{|\bb|}{\pi} t\ln\pthesis{\frac{1}{t}}}=\\  \nonumber 
&|\bb|\pthesis{\kappa_d\set{\frac{|B|}{2}-\int_{0}^{1}ds\,s^{-1}\gamma_{\dom}(\lo\,s)}+\frac{1}{\pi}\set{2\ln(2)+\int_{0}^{\infty}d\theta \pthesis{\tanh^d(\theta)-1}}}.
\end{align}
\end{thm}

The paper is organized as follows. In \S \ref{sec:prelim}, we provide some preliminary results concerning the set covariance funtion $g_B(z)$ needed to show that the term
$$\int_{0}^{1}ds
s^{-1}\gamma_{\dom}(\lo\,s)$$
is finite. In \S \ref{sec:mthproof}, we provide the proof of Theorem \ref{mthm} with the aid of a series of propositions. Finally,  we explicitly compute the third term in Theorem \ref{mthm} for $d=1$ and $d=2$.

%===================================================

\section{properties of the set covariance function of the unit ball.}\label{sec:prelim}
 In  order to compute the limit given in Theorem \ref{mthm} for the unit ball, we require to find the explicit expression for $g_B(z)$ which can be done because of the symmetry of the ball. We mentioned before that $g_B(z)=|B\cap(B+z)|$
represents the volume of the intersection of two balls of radii one, so that it is geometrically clear  that  
\begin{align}\label{scvf}
supp(g_{B})=\overline{\set{z\in \R^d: g_B(z)\neq 0}}=B_2(0).
\end{align}
The following lemma tells us that $g_B(z)$ is a radial function.
\begin{lem} For all $z\in \Rd$, we have that
$g_B(z)=g_B(|z|e_d)$
where $e_d=(0,...,0,1)\in \Rd$.
\end{lem}
\begin{proof}
 We only need to show that 
 $g_B(Tz)=g_B(z)$ for all $T:\Rd\ra \Rd$ orthogonal linear transformation. That is, $T$ satisfies that $\abs{Tz}=\abs{z}$
 for all $z\in \Rd$. It is also known that $T^{-1}$ is an orthogonal linear transformation and $\abs{det(T)}=1$. 
 
 Using the change of variables $x=Tu$ and the aforementioned properties about orthogonal linear transformations, we arrive at
 \begin{align}\label{cvft}
 g_B(Tz)&=\iord dx\,\indf{B}(x)\,\indf{B}(x-Tz)=\iord du \abs{det(T)}\,\indf{B}(Tu)\,\indf{B}(Tu-Tz)\\ \nonumber
 &=\iord du \,\indf{B}(Tu)\,\indf{B}(T(u-z)).
 \end{align}
 
By appealing to the fact that $T(B)=B=T^{-1}(B)$, it follows that $\indf{B}(Tu)=\indf{B}(u)$ and $\indf{B}(T(u-z))=\indf{B}(u-z)$ which together with \eqref{cvft} imply $g_B(Tz)=g_B(z)$.
 \end{proof}
 %--------------------------------------------------------------------------------------
 
 The following lemma provides a formula for the volume of  the intersection of two balls  in $\Rd$ with radii one whose proof can be found in \cite{Acu1}. The lemma also gives the explicit expression for $g_B(ae_d)$, $ 0\leq a\leq 2$ for $d=2$ and $d=3$  whose proof is omitted since they are obtained by appealing to elementary integration tools.

\begin{lem}\label{impl} 
Let $d\geq 2$ be an integer and  $0\leq a\leq 2$.  Then,  we have
\begin{align}\label{volball}
g_B(a\,e_d)=\abs{B\cap B_1(ae_d)}=2\,A_{d-1}\,\Theta\pthesis{\sqrt{1-\frac{a^2}{4}}}-a\,w_{d-1}\,\pthesis{1-\frac{a^2}{4}}^{\frac{d-1}{2}},
\end{align}
where 
$$\Theta(\xi)=\int_{0}^{\arcsin(\xi)}d\theta\,\sin^{d-2}(\theta)\,\cos^{2}(\theta),$$
for $0\leq \xi\leq 1$ and $w_{d-1},A_{d-1}$ as defined in \eqref{vaunitball}. 
In particular, we obtain
\begin{enumerate}
\item[$i)$] \begin{align*}
\Theta(1)&=\frac{|B|}{2A_{d-1}},\\  \nonumber
\Theta(z)&=\begin{cases}
    \frac{1}{2}\pthesis{\arcsin(\xi)+\xi\sqrt{1-\xi^2}}       & \quad \text{if } d=2,\\
   \frac{1}{3}\pthesis{1-(1-\xi^2)^{\frac{3}{2}}}  & \quad \text{if } d =3.\\
  \end{cases}
\end{align*}
\item[$ii)$] 
\begin{align*}
g_B(a\,e_d)=\begin{cases}
    2\arcsin\pthesis{\sqrt{1-\frac{a^2}{4}}}-a\sqrt{1-\frac{a^2}{4}},     & \quad \text{if } d=2,\\
  \frac{4\pi}{3}\pthesis{1-\frac{a^3}{8}}-\pi a\pthesis{1-\frac{a^2}{4}},  & \quad \text{if } d =3.\\
  \end{cases}
\end{align*}
\end{enumerate}
\end{lem}

With the previous estimates at hand, we proceed to show the finiteness of $\int_{0}^{1}dss^{-1}\gamma_{B}(2s)$ for all  $d\geq 2$ integer.
%-----------------------------------------------------------------------------------------

\begin{prop}\label{prop1} Consider $\gamma_B$ as defined in \eqref{impfunct}. Then, for all $d\geq 2$ integer, we have for $0<s\leq 1$ that
\begin{align}\label{ineq00}
\gamma_{B}(2s)&=w_{d-1}\set{1-(1-s^2)^{\frac{d-1}{2}}}-A_{d-1}\set{\frac{\Theta(1)-\Theta(\sqrt{1-s^2)}}{s}},
\end{align}
where $w_{d-1}$, $A_{d-1}$ and $\Theta$ as defined in \eqref{vaunitball} and Lemma \ref{impl}, respectively. As a result,
\begin{enumerate}
\item[$i)$] $\gamma_{B}(2s)\geq 0$ for all $0<s\leq 1$ and
\item[$ii)$]\begin{align}\label{ineq000}
0\leq \int_{0}^{1}dss^{-1}\gamma_{B}(2s)&\leq \frac{w_{d-1}\sigma_{d}}{2},
\end{align}
 with $\sigma_{d}=\mathbbm{1}_{\set{2}}(d)+\frac{d-1}{2}\cdot \mathbbm{1}_{[3,\infty)}(d)$.
 \end{enumerate}
\end{prop}

\begin{proof}
 We observe by  Lemma \ref{impl} that $g_{B}(0)=|B|=2A_{d-1}\Theta(1)$. Thus, by appealing to  the identity \eqref{volball}, we obtain that
\begin{align}
\frac{g_B(0)-g_B(2se_d)}{2s}
 =A_{d-1}\pthesis{\frac{\Theta(1)-\Theta\pthesis{\sqrt{1-s^2}}}{s}}+w_{d-1}(1-s^2)^{\frac{d-1}{2}},
\end{align}
for $0< s\leq1$. Therefore,  the desired identity \eqref{ineq00} is an easy consequence of the above equality together with \eqref{impfunct}.

To show $i)$, we need to use that the funci\'on $\Theta$ defined in Lemma \ref{impl}  can be rewritten as
\begin{align}\label{otheta}
\Theta(\xi)=\int_{0}^{\xi}dr\, r^{d-2}\,\sqrt{1-r^2}, 0\leq \xi\leq 1
\end{align} after a suitable change of variables, which in turn yields that
\begin{align}\label{ineqbasic}
\Theta(1)-\Theta(\sqrt{1-s^2})&=\int_{\sqrt{1-s^2}}^{1}dr\, r^{d-2}\,\sqrt{1-r^2}\\ \nonumber&\leq s\int_{\sqrt{1-s^2}}^{1}dr\, r^{d-2}=\frac{1}{d-1}s\pthesis{1-(1-s^2)^{\frac{d-1}{2}}}.
\end{align}
where we have used that $0\leq \sqrt{1-r^2}\leq s$
provided that $\sqrt{1-s^2}\leq r\leq 1$.
Therefore, $i)$ follows from the  inequality \eqref{ineqbasic} since by $\eqref{vaunitball}$, we have that $$\frac {w_{d-1}}{A_{d-1}}=\frac{1}{d-1}.$$

On the other hand, by appealing to the Bernoulli's inequality  $1-\sigma_{d}x\leq (1-x)^{\frac{d-1}{2}}$ for $0\leq x \leq 1,$  we conclude that
$1-(1-s^2)^{\frac{d-1}{2}}\leq \sigma_{d}s ^2$ for $0<s\leq 1$. Hence, due to \eqref{ineq00} and the fact that $\Theta(\xi)$ is a non-decreasing function (see \eqref{otheta}) as long as $\xi\in [0,1]$, we arrive at $$0\leq s^{-1}\gamma_{\dom}(2s)\leq  w_{d-1}s^{-1}\set{1-(1-s^2)^{\frac{d-1}{2}}}\leq  w_{d-1}\sigma_{d}s,$$ for all $0<s\leq 1$. Hence, inequality \eqref{ineq000} is obtained by  integration and this finishes the proof.
\end{proof}

\section{proof of theorem \ref{mthm}}\label{sec:mthproof}

The key step to proving Theorem \ref{mthm} consists on expressing the heat content $\Halp$ in terms of the set covariance function $g_{\dom}(z)$ whose support is $B_2(0)$  and appeal to spherical coordinates   to obtain a  suitable decomposition.

To begin with, by applying Fubini's Theorem and performing a simple change of variable, we have  based on  \eqref{defhc} and \eqref{gdef} that
\begin{align*}
\Halp=\iord dx\iord dy \,\palp(x-y)\indf{\dom}(y)\indf{\dom}(x)=\iord dz\,
\palp(z)\,\cvf(z).
\end{align*}
By using the scaling property of the Poisson heat kernel, namely, $p_t(x)=t^{-d}p_1(t^{-1}x)$ and the change of variable $w=t^{-1}\,z$, we arrive at
\begin{align*}
\Halp=\iord dz\,
\palp(z)\,\cvf(z)=\iord dz\,t^{-d}
p_1(t^{-1}z)\,\cvf(z)=\iord dw\,
p_1(w)\cvf\pthesis{tw}.
\end{align*}

%==================================================

Next, the facts that $\cvf(tw)=0$ only when $|tw|\geq 2$ (see \eqref{scvf}) and  $\cvf(0)=|\dom|$ lead to the following decomposition of $\Halp$.
\begin{align}\label{ee1}
\Halp&=\int_{|tw|< 2} dw\,
p_1(w)\cvf\pthesis{tw}\\ \nonumber&=|\dom|\int_{|tw|< \lo}dw\,p_1(w)+
\int_{|tw|<\lo}dw\,p_1(w)\pthesis{\cvf(tw)-\cvf(0)}\\ \nonumber
&=|\dom|-|\dom|\int_{|tw|\geq \lo}dw\,p_1(w)+
\int_{|tw|<\lo}dw\,p_1(w)\pthesis{\cvf(tw)-\cvf(0)},
\end{align}
where we have also used, according to \eqref{intvalue1},  that 
$$1=\int_{\Rd}dw\,p_1(w)=\int_{\Omega}dw\,p_1(w)+\int_{\Omega^c}dw\,p_1(w)$$
for any domain $\Omega\subset\Rd$.
Therefore, we conclude from  the identity \eqref{ee1} that 
\begin{align}\label{id1}
|\dom|-\Halp= |\dom|\phi(t)+ I_{\dom}(t),
\end{align}
where we have defined 
\begin{align}\label{Mdef}
\phi(t)&=\int_{|tw| \geq \lo}dw\,p_1(w),\\ \nonumber
I_{\dom}(t)&=\int_{|tw|< \lo}dw\,p_1(w)\pthesis{\cvf(0)-\cvf(tw)}.
\end{align}
%===================================================

The following proposition provides further properties concerning the function $\phi(t)$ previously defined.
\begin{prop}\label{prop1 }
Consider $\phi(t)$ for $t\geq 0$ as defined in \eqref{Mdef}. Then,
\begin{enumerate}
\item[$i)$] $0\leq \phi(t)\leq 1 $ is a nondecreasing function on $[0,\infty)$ with  $\lim\limits_{\tgo}\phi(t)=0$.\\
\item[$ii)$]
\begin{align*}
\lim_{\tgo}\frac{\phi(t)}{t}=\frac{|\bb|\kappa_d}{2},
\end{align*}
with  $\kappa_d $ as defined in  \eqref{kappadef}.
\end{enumerate}
\end{prop}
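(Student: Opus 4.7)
For part $(i)$, I would argue directly from the definition \eqref{Mdef}. Nonnegativity of $\phi_{\dom}(t)$ is immediate since $p_1\geq 0$, and the bound $\phi_{\dom}(t)\leq 1$ follows from \eqref{intvalue1}. Monotonicity is a set inclusion observation: if $0<t_1\leq t_2$, then $\lo/t_2\leq \lo/t_1$, so $\{w:|t_2 w|\geq \lo\}=\{|w|\geq \lo/t_2\}$ contains $\{|w|\geq \lo/t_1\}$, hence $\phi_{\dom}(t_1)\leq \phi_{\dom}(t_2)$. Finally, as $\tgo$ the indicator $\indf{|tw|\geq \lo}(w)$ tends to $0$ pointwise for every fixed $w\in\Rd$ (since eventually $t|w|<\lo$), and is dominated by $p_1\in L^1(\Rd)$. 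Dominated convergence then gives $\phi_{\dom}(0+)=0$.

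For part $(ii)$, the plan is to exploit the radial symmetry of $p_1$ to reduce $\phi_{\dom}(t)$ to a one-dimensional tail integral. Passing to spherical coordinates on $\{w\in\Rd:|w|\geq \lo/t\}$ and using $p_1(w)=\kappa_d(1+|w|^2)^{-(d+1)/2}$ yields
\begin{align*}
\phi_{\dom}(t)=A_d\,\kappa_d\int_{\lo/t}^{\infty}\frac{r^{d-1}}{(1+r^2)^{(d+1)/2}}\,dr.
\end{align*}
Writing $R=\lo/t$, dividing by $t$ and using $1/t=R/\lo$ produces
\begin{align*}
\frac{\phi_{\dom}(t)}{t}=\frac{A_d\,\kappa_d}{\lo}\,R\int_{R}^{\infty}\frac{r^{d-1}}{(1+r^2)^{(d+1)/2}}\,dr,
\end{align*}
so everything reduces to proving that $R\int_R^{\infty}r^{d-1}(1+r^2)^{-(d+1)/2}\,dr\to 1$ as $R\to\infty$.

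To establish this limit cleanly (which is the only real calculation in the proof), I would perform the substitution $u=R/r$, under which $dr=-R u^{-2}du$ and $r^2=R^2/u^2$. A direct computation gives
\begin{align*}
R\int_{R}^{\infty}\frac{r^{d-1}\,dr}{(1+r^2)^{(d+1)/2}}=\int_{0}^{1}\frac{du}{(1+u^2/R^2)^{(d+1)/2}}.
\end{align*}
The integrand is bounded by $1$ on $[0,1]$ and converges pointwise to $1$ as $R\to\infty$, so dominated convergence yields the limit $\int_0^1 du=1$. Combining this with the previous display gives $(ii)$. The main thing to be careful about is ensuring the right choice of substitution so the limiting integral lives on a fixed compact interval where dominated convergence is trivial; otherwise one has to handle the tail $\{r\geq R\}$ and the singular behavior of $(1+r^2)^{-(d+1)/2}$ simultaneously, which is more awkward but still elementary.
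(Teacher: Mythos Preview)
Your argument is correct. Part $(i)$ matches the paper in spirit: the paper first passes to spherical coordinates to write $\phi_{\dom}(t)=A_d\kappa_d\int_{\lo/t}^{\infty}r^{d-1}(1+r^2)^{-(d+1)/2}\,dr$ and then reads off boundedness, monotonicity and $\phi_{\dom}(0+)=0$ from that formula, whereas you extract the same facts directly from the definition via set inclusion and dominated convergence; these are essentially interchangeable.

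For part $(ii)$ your route genuinely differs from the paper's. The paper differentiates the radial integral to obtain the closed form $\phi_{\dom}'(t)=A_d\kappa_d\lo^d(t^2+\lo^2)^{-(d+1)/2}$ and then invokes L'H\^opital's rule (legitimate since $\phi_{\dom}(0+)=0$) to conclude $\lim_{\tgo}\phi_{\dom}(t)/t=\phi_{\dom}'(0+)=A_d\kappa_d/\lo$. You instead rescale by $u=R/r$ with $R=\lo/t$, landing on $\int_0^1(1+u^2/R^2)^{-(d+1)/2}\,du\to 1$ by dominated convergence. Both arguments are short and rigorous; the paper's is slightly more economical once the derivative is written down, while yours avoids any appeal to L'H\^opital and keeps the analysis purely at the level of integrals over a fixed interval.
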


\begin{proof}
To begin with,  by appealing to \eqref{probc}, we have that $\phi(t)=\Prob\pthesis{|X_1|\geq 2t^{-1} |X_0=0}$ which implies that $0\leq \phi(t)\leq 1$. Next, by using  spherical coordinates and the definition of the Poisson heat kernel \eqref{cd}, we have for $t>0$ that
$$\phi(t)=\kappa_d \int_{|tw|\geq 2}\frac{dw}{(1+|w|^2)^{\frac{d+1}{2}}}=|\bb|\,\kappa_d\int_{2\,t^{-1}}^{\infty}\frac{dr\,r^{d-1}}{(1+r^2)^{\frac{d+1}{2}}}.$$
By considering the change of variable $\zeta=r^{-1}$, we obtain that
\begin{align*}
\phi(t)=|\bb|\,\kappa_d\int_{0}^{t/2}\frac{d\zeta}{(1+\zeta^2)^{\frac{d+1}{2}}},
\end{align*}
from which we conclude $i)$. Moreover, it is clear that $\phi(t)$ is differentiable over $(0,\infty)$ with
$$\phi'(t)=\frac{|\bb|\,\kappa_d}{2\pthesis{\frac{t^2}{4}+1}^{\frac{d+1}{2}}},$$
due to the Fundamental Theorem of Calculus.
Then, by part $i)$, we can apply L'Hopital's rule to obtain 
\begin{align*}
\lim_{\tgo}\frac{\phi(t)}{t}=\lim_{\tgo}\phi'(t)=\frac{|\bb|\kappa_d}{2},
\end{align*}
 which proves $ii)$ and this finishes the proof.
\end{proof}
%=================================================

We now proceed to investigate the function $I_{\dom}(t)$ defined in \eqref{Mdef}.
\begin{prop}\label{prop22} Consider  $I_{\dom}(t)$ as defined in \eqref{Mdef} for $t>0$ and set
\begin{align}\label{2impf}
\Psi(t)&=\int_{0}^{\lo\,t^{-1}}\frac{dr\,r^d}{(1+r^2)^{\frac{d+1}{2}}}.
\end{align}

Then,
\begin{align*}
I_{\dom}(t)=|\bb|t\pthesis{\frac{1}{\pi}\Psi(t)-R_{\dom}(t)},
\end{align*}
where
\begin{align}\label{impfunct2}
R_{\dom}(t)&=\int_{0}^{\lo\,t^{-1}}dr r^d p_1(re_d) \gamma_{\dom}(tr)\geq 0,
\end{align}
with $\gamma_{\dom}$ as defined in  \eqref{impfunct}.
\end{prop}

\begin{proof}  We start by noticing that with the aid of  spherical coordinates and the fact that both the Poisson heat kernel  and $g_B(z)$ are  radial functions (see Lemma \ref{scvf}), we can rewrite $I_{\dom}(t)$ as follows.
\begin{align*}
I_{\dom}(t)&=|\bb|t\int_{0}^{\lo\,t^{-1}}dr\, r^d\, p_1(r\,e_d)\pthesis{\frac{\cvf\pthesis{0}-\cvf\pthesis{tre_d}}{tr}}\\ \nonumber
&=|\bb|t\int_{0}^{\lo\,t^{-1}}dr\, r^d\, p_1(r\,e_d)\pthesis{\frac{\cvf\pthesis{0}-\cvf\pthesis{tre_d}}{tr}-w_{d-1}+w_{d-1}}
\\ \nonumber
&=|\bb|t\pthesis{w_{d-1}\kappa_d\Psi(t)-R_B(t) }.
\end{align*}
Now, the desired result follows from the last expression by using  the fact that
 \eqref{vaunitball} and \eqref{kappadef} imply that
\begin{align*}
\kappa_{d}\,w_{d-1}=\Gamma\pthesis{\frac{d+1}{2}}\pi^{-\frac{d+1}{2}}\cdot\frac{\pi^{\frac{d-1}{2}}}{\Gamma\pthesis{1+\frac{d-1}{2}}}=\frac{1}{\pi}.
\end{align*}
Finally, $R_{\dom}(t)\geq 0$ is deduced from Proposition \ref{prop1}  and this concludes the proof.
\end{proof}
%===================================================

Now, we proceed to study the small time behavior of the  functions  $\Psi(t)$ and $R_{\dom}(t)$ defined in the previous proposition.

\begin{prop} \label{propimp}Let $\Psi(t)$ be the function defined in \eqref{2impf} for $t>0$. Then, $$\Psi(t)=\ln\pthesis{\frac{1}{t}}+F(t),$$
where 
\begin{align}\label{Fdec}
F(t)=\ln\pthesis{\lo+\sqrt{4+t^2}}+\int_{0}^{\arcsinh(\lo\,t^{-1})}(\tanh^d(\theta)-1).
\end{align}
Furthermore,
\begin{align}\label{lim}
\lim\limits_{\tgo}F(t)=2\ln(2)+\int_{0}^{\infty}d\theta \pthesis{\tanh^d(\theta)-1}.
\end{align}
\end{prop}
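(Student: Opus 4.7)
The plan is to reduce $\Psi_{\dom}(t)$ to an elementary integral via a hyperbolic substitution, and then identify the logarithmic divergence explicitly.

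First I would apply the change of variable $r=\sinh(\theta)$ in \eqref{2impf}, so that $dr=\cosh(\theta)\,d\theta$ and $1+r^2=\cosh^2(\theta)$. The integrand then collapses to
\begin{align*}
\frac{r^{d}\,dr}{(1+r^2)^{\frac{d+1}{2}}}=\frac{\sinh^d(\theta)\,\cosh(\theta)}{\cosh^{d+1}(\theta)}\,d\theta=\tanh^{d}(\theta)\,d\theta,
\end{align*}
and the upper limit $\lo\,t^{-1}$ becomes $\arcsinh(\lo\,t^{-1})$. Hence $\Psi_{\dom}(t)=\int_{0}^{\arcsinh(\lo\,t^{-1})}\tanh^{d}(\theta)\,d\theta$.

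Next, I would split $\tanh^{d}(\theta)=1+(\tanh^{d}(\theta)-1)$. The constant part integrates to $\arcsinh(\lo\,t^{-1})$, which by the standard identity $\arcsinh(x)=\ln(x+\sqrt{x^2+1})$ equals
\begin{align*}
\arcsinh(\lo\,t^{-1})=\ln\!\pthesis{\frac{\lo+\sqrt{\lo^2+t^2}}{t}}=\ln\!\pthesis{\frac{1}{t}}+\ln\!\pthesis{\lo+\sqrt{\lo^2+t^2}}.
\end{align*}
Combining this with the residual integral $\int_{0}^{\arcsinh(\lo\,t^{-1})}(\tanh^{d}(\theta)-1)\,d\theta$ yields exactly the claimed decomposition $\Psi_{\dom}(t)=\ln(1/t)+F_{\dom}(t)$ with $F_{\dom}(t)$ as in \eqref{Fdec}.

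For the limit \eqref{lim}, I would handle the two summands of $F_{\dom}(t)$ separately. The first term $\ln(\lo+\sqrt{\lo^2+t^2})$ is continuous at $t=0$ and tends to $\ln(2\lo)$. For the second, since $\arcsinh(\lo\,t^{-1})\to\infty$ as $\tgo$, one needs convergence of the improper integral $\int_{0}^{\infty}(\tanh^{d}(\theta)-1)\,d\theta$. This is the only substantive point: using $1-\tanh(\theta)=\tfrac{2e^{-2\theta}}{1+e^{-2\theta}}=O(e^{-2\theta})$, one gets $1-\tanh^{d}(\theta)=O(e^{-2\theta})$, so the tail is absolutely integrable and dominated convergence (or monotonicity, since $\tanh^d(\theta)-1\le 0$ is monotonically increasing toward $0$) gives the desired limit. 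The main (mild) obstacle is this exponential decay estimate; everything else is a routine substitution.
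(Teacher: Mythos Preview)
Your argument is correct and follows essentially the same route as the paper: the substitution $r=\sinh\theta$, the split $\tanh^d\theta=1+(\tanh^d\theta-1)$, and the identity for $\arcsinh$ are all exactly what the paper does. The only cosmetic difference is in the tail estimate for $\int_0^\infty(\tanh^d\theta-1)\,d\theta$: the paper expands $\tanh^d\theta=(1-2/(e^{2\theta}+1))^d$ binomially and bounds term by term, whereas your factorization giving $1-\tanh^d\theta=O(e^{-2\theta})$ is a slightly cleaner way to reach the same dominated-convergence conclusion.
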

\begin{proof}
The change of variable $r=\sinh(\theta)$ turns the function $\Psi(t)$ defined in \eqref{2impf} into
$$\int_{0}^{\arcsinh(\lo\,t^{-1})}d\theta \tanh^d(\theta),$$
that together with  the identity
\begin{align*}
\arcsinh(\lo\,t^{-1})&=\ln\pthesis{\frac{1}{t}}+\ln\pthesis{\lo +\sqrt{t^2+4}},
\end{align*}
produces the desired identity \eqref{Fdec} since 
\begin{align*}
\Psi(t)&=\int_{0}^{\arcsinh(\lo\,t^{-1})}d\theta \pthesis{1+(\tanh^d(\theta)-1)}\\
&= \arcsinh(\lo\,t^{-1})+\int_{0}^{\arcsinh(\lo\,t^{-1})}d\theta \pthesis{\tanh^d(\theta)-1}.
\end{align*}

On the other hand, the following equality
$$\tanh^d(\theta)=\pthesis{1-\frac{2}{e^{2\theta}+1}}^d=1+\mysum{j}{1}{d} \binom{d}{j}\frac{(-2)^{j}}{(e^{2\theta}+1)^j},$$
and the fact that $$\int_{0}^{\infty}\frac{d\theta}{(e^{2\theta}+1)^{j}}\leq \int_{0}^{\infty}d\theta\,e^{-2\theta\,j} =\frac{1}{2j},$$
show that
\begin{align*}
\int_{0}^{\infty}d\theta \abs{\tanh^d(\theta)-1}\leq \mysum{j}{1}{d}\binom{d}{j}\frac{2^j}{2j}.
\end{align*}
Thus, an application of the Lebesgue dominated converge theorem shows that $\eqref{lim}$ holds true.
\end{proof}
%==================================================

We now continue with the study of  $R_{\dom}(t)$ as $\tgo$.

\begin{prop}\label{prop3} Let $R_{\dom}(t)$ be the function defined in $\eqref{impfunct2}$ and consider $\gamma_{\dom}$ as defined in $\eqref{impfunct}$. 
%with $\gamma_{\dom}(\lo s)\in L^1((0,1), s^{-1}ds)$.
Then,
\begin{align}
\lim\limits_{\tgo}R_{\dom}(t)=\kappa_{d}\int_{0}^{1}ds
s^{-1}\gamma_{\dom}(\lo s),
\end{align}
with $\kappa_d$  as defined in
\eqref{kappadef}.
\end{prop}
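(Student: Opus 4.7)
The plan is to reduce $R_{\dom}(t)$ to a single integral over $(0,\lo)$ whose integrand has a clean pointwise limit as $\tgo$, then invoke dominated convergence. First I would substitute $u=tr$ in \eqref{impfunct2}. Using $p_1(re_d)=\kappa_d(1+r^2)^{-(d+1)/2}$, the factor $r^d/(1+r^2)^{(d+1)/2}$ becomes $tu^d/(t^2+u^2)^{(d+1)/2}$, and together with $dr=du/t$ this yields
\begin{align*}
R_{\dom}(t)=\kappa_d\int_{0}^{\lo}\frac{u^d}{(t^2+u^2)^{(d+1)/2}}\,\gamma_{\dom}(u)\,du.
\end{align*}

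Next I would identify the pointwise limit. For every fixed $u\in(0,\lo]$, $\frac{u^d}{(t^2+u^2)^{(d+1)/2}}\to u^{-1}$ as $\tgo$, so the integrand converges pointwise to $\gamma_{\dom}(u)/u$. For a $t$-independent majorant I would use the elementary inequality $(t^2+u^2)^{(d+1)/2}\geq u^{d+1}$, valid for all $t\geq0$ and $u>0$, which gives
\begin{align*}
0\leq \frac{u^d}{(t^2+u^2)^{(d+1)/2}}\,\gamma_{\dom}(u)\leq \frac{\gamma_{\dom}(u)}{u},
\end{align*}
where nonnegativity of $\gamma_{\dom}$ comes from Proposition \ref{derlip}$(iii)$. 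The rescaling $u=\lo s$ then produces $\int_{0}^{\lo}\gamma_{\dom}(u)/u\,du=\int_{0}^{1}\gamma_{\dom}(\lo s)/s\,ds$, which is finite precisely because $\dom\in\mathcal{W}$ (Definition \ref{defW}). Lebesgue dominated convergence applies and yields
\begin{align*}
\lim_{\tgo}R_{\dom}(t)=\kappa_d\int_{0}^{\lo}\frac{\gamma_{\dom}(u)}{u}\,du=\kappa_d\int_{0}^{1}s^{-1}\gamma_{\dom}(\lo s)\,ds,
\end{align*}
which is exactly the claimed identity.

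The main obstacle, conceptually, is ensuring the dominating function is integrable near $s=0$: since $\gamma_{\dom}(\lo s)/s$ is typically not better than $O(s^{-1})$, without the standing hypothesis $\dom\in\mathcal{W}$ the argument would fail. The whole proof is essentially the observation that the class $\mathcal{W}$ is tailor-made so that this single application of dominated convergence goes through; no finer cancellation or asymptotic expansion of $\gamma_{\dom}$ near zero is required.
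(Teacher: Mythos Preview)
Your proof is correct and follows essentially the same route as the paper: the same change of variables (your $u=tr$ is the paper's $s=u/\lo$), the same pointwise bound $u^d/(t^2+u^2)^{(d+1)/2}\leq u^{-1}$, and the same pointwise limit. The only difference is in the final passage to the limit: you invoke dominated convergence, using the hypothesis $\dom\in\mathcal{W}$ to ensure the majorant $s^{-1}\gamma_{\dom}(\lo s)$ is integrable, whereas the paper combines the direct upper bound $R_{\dom}(t)\le\kappa_d\int_0^1 s^{-1}\gamma_{\dom}(\lo s)\,ds$ with Fatou's lemma for the lower bound. The paper's argument thus establishes the limit (possibly as $+\infty$) without assuming $\dom\in\mathcal{W}$, a point it flags in the remark immediately following the proof; your version is slightly cleaner to read but consumes the integrability hypothesis at this step.
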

\begin{proof} We know from $\eqref{impfunct2}$ that 
$$R_{\dom}(t)=\int_{0}^{\lo\,t^{-1}}dr r^d p_1(re_d) \gamma_{\dom}(tr),$$
for $t>0$.
Next, the change of variable $r=\lo t^{-1}s$  turns $R_{\dom}(t)$ into 
$$\lo^{d+1}\,\kappa_{d}\int_{0}^{1}ds \frac{s^{d}\gamma_{\dom}(\lo s)}{(t^2+\lo^2\,s^2)^{\frac{d+1}{2}}}.$$
Now, the basic inequality $\lo^2\,s^2\leq t^2+\lo^2\,s^2$ shows that
$$0\leq R_{\dom}(t)\leq \kappa_{d}\int_{0}^{1}ds
s^{-1}\gamma_{\dom}(\lo s).$$ In particular, we conclude
\begin{align}\label{1}
\varlimsup_{\tgo}R_{\dom}(t)\leq \kappa_{d}\int_{0}^{1}ds
s^{-1}\gamma_{\dom}(\lo s).
\end{align}

On the other hand, since $\gamma_{\dom}(2s)\geq 0$ for $0<s\leq 1$ due to Proposition \ref{ineqbasic},  we obtain by Fatou's Lemma  that
\begin{align}\label{2}
\varliminf_{\tgo}R_{\dom}(t)\geq \lo^{d+1}\,\kappa_{d}\int_{0}^{1}ds \varliminf_{\tgo}\frac{s^{d}\gamma_{\dom}(\lo s)}{(t^2+\lo^2\,s^2)^{\frac{d+1}{2}}}=\kappa_{d}\int_{0}^{1}ds
s^{-1}\gamma_{\dom}(\lo s).
\end{align}
Hence, the desired result follows from \eqref{1} and \eqref{2}. 
\end{proof}

%==================================================
Now that we have all the necessary facts at hand, we proceed to give the proof of our main result.

{\bf Proof of Theorem \ref{mthm}:}
Because of the identity \eqref{id1} and Proposition \ref{prop22} , we have that
\begin{align*}
|\dom|-\Halp&= |\dom|\phi(t)+ I_{\dom}(t)\\ 
&=|\dom|\phi(t)+|\bb|t\pthesis{\frac{1}{\pi}\Psi(t)-R_{\dom}(t)}
\end{align*}
Hence, by Proposition \ref{propimp}, we arrive at
\begin{align*}
|\dom|-\Halp&= |\dom|\phi(t)+|\bb|t\pthesis{\frac{1}{\pi}\ln\pthesis{\frac{1}{t}}+\frac{1}{\pi}F(t)-R_{\dom}(t)}.
\end{align*}
The last identity yields
\begin{align*}
\frac{1}{t}\pthesis{|\dom|-\Halp-\frac{|\bb|}{\pi}\,t\,\ln\pthesis{\frac{1}{t}}}&= |\dom|\frac{\phi(t)}{t}+|\bb|\pthesis{\frac{F(t)}{\pi}-R_B(t)}
\end{align*}
so that by combining together the results given in Propositions \ref{prop1 }, \ref{propimp} and \ref{prop3}, we arrive at

\begin{align*}
\lim\limits_{\tgo}&\frac{1}{t}\pthesis{|\dom|-\Halp-\frac{|\bb|}{\pi}\,t\,\ln\pthesis{\frac{1}{t}}}=\\ &\frac{|\dom||\bb|\kappa_d}{2}+|\bb|\pthesis{\frac{1}{\pi}\set{2\ln(2)+\int_{0}^{\infty}d\theta \pthesis{\tanh^d(\theta)-1}}-\kappa_{d}\int_{0}^{1}ds
s^{-1}\gamma_{\dom}(\lo s)},
\end{align*}
where the last expression is equivalent to \eqref{implim} after some simplifications.
\qed

\section{Computation of the third term for the unit ball in the plane and space}\label{sec:compu}

The goal of this section is  to calculate explicitly the limit in Theorem \ref{mthm} for the unit ball in the plane and space. This can be done because  the term $$\int_{0}^{1}ds s^{-1}\gamma_{\dom}(\lo\,s)$$ can be computed explicitly for $d=2$ and $d=3$.  Indeed, the above integral can be computed for any dimension $d\geq 2$, but calculations become complicated as the dimension gets higher. It is worth mentioning that in this section, we will only present the values of the integrals since they can be obtained by employing an integral calculator.

\begin{prop} Let $B=B_1(0)$ be the unit ball in $\R^d$. Then,
\begin{align}
\lim\limits_{\tgo}\frac{1}{t}\pthesis{|B|-\mathbb{H}_{B}(t)-\frac{|\bb|}{\pi} t\ln\pthesis{\frac{1}{t}}}=\begin{cases}
    6\ln(2)-2       & \quad \text{if } d=2,\\
   4\ln(2) & \quad \text{if } d =3.\\
  \end{cases}
\end{align}
\end{prop}
\begin{proof}
 By using identity \eqref{ineq00} and Lemma \ref{impl}, we have that
 $$ \gamma_{B}(2s)=\begin{cases}
    2-\sqrt{1-s^2}+\frac{1}{s}\pthesis{\arcsin(\sqrt{1-s^2})-\frac{\pi}{2}}       & \quad \text{if } d=2,\\
   \frac{\pi}{3} s^2  & \quad \text{if } d =3,\\
   \end{cases}$$
which in turn implies by using an integral calculator that
\begin{align}\label{intvalue}
    \int_{0}^{1}dss^{-1}\gamma_{B}(2s)=\begin{cases}
    \frac{\pi-4\ln(2)}{2}      & \quad \text{if } d=2,\\
   \frac{\pi}{6}  & \quad \text{if } d =3.\\
   \end{cases}
   \end{align}

Therefore, by combining together the values given in the   table below together with \eqref{intvalue}, we deduce the desired result. 
  \begin{table}[htb]
\centering
\begin{tabular}{|l|l|l|l|l|}
\hline
& \multicolumn{4}{c|}{Values for the unit ball} \\
\cline{1-5}
 $d$& \bigstrut$A_d=|\bb|$& $w_d=|B|$ & $\kappa_d$ & $\int_{0}^{\infty}d\theta(\tanh^d(\theta)-1)$\\
\hline 
\multirow{1}{1cm}{$2$}\bigstrut& $2\pi$ & $\pi$ & $\frac{1}{2\pi}$ & $-1$\\   \cline{1-5}
\multirow{1}{1cm}{$3$} \bigstrut& $4\pi$ & $\frac{4\pi}{3}$ & $\frac{1}{\pi^{2}}$  & $-\ln(2)-\frac{1}{2}$\\ \hline 
\end{tabular}
\label{tabla:final}
\end{table}

\end{proof}

{\bf Conclusions:} Based on the result entailed in Theorem \ref{mthm}, we conjecture that the spectral heat content $Q_B(t)$ defined in \eqref{shc} behaves 
as
$$Q_B(t)=|B|-C_d^{1}(B)t\ln\pthesis{\frac{1}{t}}-C_d^{2}(B)t+o(t), \tgo$$
 for some positive constants $C_d^{1}(B)$ y $C_d^{2}(B)$ depending on the geometry of the unit ball $B$ (they are not expected to have a nice and closed form) where our limit stated in Theorem \ref{mthm} might provide either an upper or lower bound for the constant $C_d^{2}(B)$ as it is done for $C_d^{1}(B)$ in \cite{Acu, TPS}. 
 
 On the other hand, some of the techniques and geometry objects  employed throughout the paper might be extended for other domains $\Omega$ different from the unit ball by working with functions of bounded variation (see for example \cite{Evans, Galerne, Mor}). However, the main difficulty arises from the fact that the covariance function $g_{\Omega}(z)$ cannot be found explicitly in general even for nice sets as open bounded convex sets where a different approach is required. 
\\

{\bf Acknowledgements}: This  investigation has been supported by Universidad de Costa Rica, project 1563.
\\

\end{document}